\title{Some remarks on the ergodic theorem for $U$-statistics}
 \keywords{$U$-statistics, ergodic theorem, stationary sequences}
\author{Herold Dehling, Davide Giraudo and Dalibor Voln\'y} 
\address{Fakultät für Mathematik, Ruhr-Universität Bochum, 44780 Bochum, Germany}
\email{herold.dehling@ruhr-uni-bochum.de}
\address{
Institut de Recherche Mathématique Avancée UMR 7501, Université de
Strasbourg and CNRS 7 rue René Descartes 67000 Strasbourg, France}
\email{dgiraudo@unistra.fr}
\address{University de Rouen, LMRS and CNRS UMR 6085.}
\email{dalibor.volny@univ-rouen.fr}
\date{\today}
\renewcommand{\leq}{\leqslant}
\renewcommand{\geq}{\geqslant}
\newtheorem{Theorem}{Theorem}[section]
\newtheorem{Proposition}[Theorem]{Proposition}
\newtheorem{Lemma}[Theorem]{Lemma}
\theoremstyle{remark}
\tikzstyle{Vertex}=[circle,draw=LimeGreen!80,fill=LimeGreen!8,
\tikzstyle{Node}=[Vertex,draw=RoyalBlue!80,fill=RoyalBlue!8,inner sep=1.5pt]
\tikzstyle{Leaf}=[rectangle,draw=Black!70,fill=Black!16,
\tikzstyle{Edge}=[Maroon!80,cap=round,line width=1pt]
\tikzstyle{Mark1}=[draw=BrickRed!80,fill=BrickRed!8]
\tikzstyle{Mark2}=[draw=BurntOrange!80,fill=BurntOrange!8]
\tikzstyle{EdgeRew}=[->,RedOrange!80,cap=round,thick]
\newcommand{\Bca}{\mathcal{B}}
\newcommand{\Fca}{\mathcal{F}}
\newcommand \ens[1]{\left\{ #1\right\}}
\newcommand \R{\mathbb R}
\newcommand \N{\mathbb N}
\newcommand \PP{\mathbb P}
\newcommand{\E}[1]{\mathbb E\left[#1\right]}
\newcommand \abs[1]{\left|#1\right|}
\newcommand \eps{\varepsilon}
\newcommand{\pr}[1]{\left(#1\right)}
\newcommand{\ind}[1]{\mathbf{1}_{#1}}
\subjclass{37A30, 60F05}
\begin{document}

\begin{abstract}
In this note, we investigate the convergence of a $U$-statistic of order two having stationary ergodic data. We will find sufficient conditions for the almost sure and
$L^1$ convergence and present some counter-examples showing that the $U$-statistic itself might fail to converge: centering is needed as well as
finteness of $\sup_{j\geq 2}\E{\abs{h\pr{X_1,X_j}}}$.
\end{abstract}
\maketitle

% Example of section
\section{Introduction}
In this note, we investigate the validity of the $U$-statistics ergodic theorem, i.e. the almost sure convergence
\begin{equation}
  \frac{1}{\binom{n}{2}} \sum_{1\leq i<j\leq n} h(X_i,X_j) \longrightarrow \iint h(x,y) dF(x) dF(y),
\label{eq:u-ergodic}
\end{equation}
where $\pr{X_i}_{i\geq 1}$ is a stationary ergodic process with marginal distribution $F$, and $h\pr{x,y}$ is a symmetric kernel that is $F\times F$ integrable. Birkhoff's ergodic theorem establishes the analogous result for the time averages $\frac{1}{n} \sum_{i=1}^n f(X_i) $, while  Hoeffding \cite{Hoef:1961} established \eqref{eq:u-ergodic} for i.i.d. processes $(X_i)_{i\geq 1}$.  These two classical results naturally lead to the conjecture that \eqref{eq:u-ergodic} should hold without further assumptions, i.e. for all stationary ergodic processes $\pr{X_i}_{i\geq 1}$ and all $L_1(F\times F)$ functions $h(x,y)$.
Aaronson et al. \cite{MR1363941} proved a partial result in this direction, namely showing that \eqref{eq:u-ergodic} holds for all  $F\times F$ almost everywhere continuous and bounded kernels $h(x,y)$. At the same time, they presented counterexamples showing that \eqref{eq:u-ergodic} does not hold in full generality. One of their counterexamples is a bounded kernel where the set of discontinuities has positive $F\times F$ measure, while the other counterexample is an $F\times F$ almost everywhere continuous, but unbounded kernel.

The $U$-statistic ergodic theorem has subsequently been addressed by various authors, e.g. Arcones \cite{MR1624866}, Borovkova, Burton and Dehling
\cite{MR1687339}; see also the review paper by Borovkova, Burton and Dehling \cite{MR1979966}.
These papers provide both sufficient conditions for \eqref{eq:u-ergodic} to hold, as well as further counterexamples, both for stationary ergodic processes as well as under stronger mixing assumptions.
Most of the positive results also address other forms of convergence in \eqref{eq:u-ergodic} such as convergence in probability and $L^1$-convergence.  Arcones \cite{MR1624866} proved the ergodic theorem for absolutely regular processes under some moment assumptions. Borovkova, Burton and Dehling \cite{MR1979966} investigated convergence in probability in \eqref{eq:u-ergodic}, with a special focus on the kernel $h(x,y)=\log(|x-y|)$, which arises in connection with the Takens estimator for the correlation dimension.

A common feature of all these examples is that
they satisfy a modified version of the $U$-statistics ergodic theorem, namely
\begin{equation}
  \frac{1}{\binom{n}{2}} \sum_{1\leq i <j \leq n} \pr{ h(X_i,X_j) - \E{h\pr{X_i,X_j}} }\longrightarrow 0,
\label{eq:u-ergodic-2}
\end{equation}
assuming that $\E{\abs{h\pr{X_i,X_j}}}<\infty$ for all $i,j$.

It might thus seem natural to conjecture that \eqref{eq:u-ergodic-2} holds without further assumptions. In this note, we present a counterexample that disproves this conjecture. In addition, we will give a short proof of the $U$-statistics ergodic theorem for bounded $F\times F$-almost everywhere continuous kernels, and give a new condition for $L^1$-convergence.

\section{A short proof of the ergodic theorem for $U$-statistics}
In this note, we present a short proof of the $U$-statistics ergodic theorem that was first established in Aaronson et al \cite{MR1363941}. For the special case, when the process has values in $\R^k$, this proof is contained in Borovkova, Burton and Dehling \cite{MR1979966}. Here, we give the proof for processes with values in an arbitrary separable metric space.
\begin{Theorem}
Let $\pr{X_k}_{k\geq 0}$ be a stationary ergodic process with values in the separable metric space $S$ and marginal distribution $F$, and let $h:S\times S\rightarrow \R$ be a symmetric kernel that is bounded and $F\times F$-almost everywhere continuous. Then, as $n\rightarrow \infty$
\[
  \frac{1}{\binom{n}{2}} \sum_{1\leq i<j \leq n} h(X_i,X_j) \longrightarrow \iint h(x,y) dF(x) dF(y),
\]
almost surely.
\end{Theorem}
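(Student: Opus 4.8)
The plan is to reduce the statement to the classical multivariate Birkhoff ergodic theorem by approximating the kernel $h$ by a finite linear combination of products of bounded continuous functions. First I would invoke the empirical measure approach: let $F_n = \frac1n\sum_{i=1}^n \delta_{X_i}$ be the empirical distribution of the sample. The left-hand side is, up to the negligible diagonal correction (which contributes $O(1/n)$ since $h$ is bounded),
\[
\frac{1}{\binom n2}\sum_{1\le i<j\le n} h(X_i,X_j) = \frac{n}{n-1}\iint h(x,y)\, dF_n(x)\, dF_n(y) - \frac{1}{n-1}\cdot\frac1n\sum_{i=1}^n h(X_i,X_i).
\]
So it suffices to show $\iint h\, dF_n\, dF_n \to \iint h\, dF\, dF$ almost surely.

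The key tool is that, by Birkhoff's theorem applied to each of a countable family of bounded continuous test functions, one has $F_n \Rightarrow F$ weakly, almost surely. Concretely: fix a countable set $\{g_k\}$ of bounded continuous functions on $S$ that is convergence-determining for weak convergence on the separable metric space $S$ (such a set exists since $S$ is separable). For each $k$, $\frac1n\sum_{i=1}^n g_k(X_i) \to \int g_k\, dF$ a.s. by Birkhoff; intersecting the countably many full-measure events gives a single event of probability one on which $F_n \Rightarrow F$. On this event, the product measures satisfy $F_n\times F_n \Rightarrow F\times F$ on $S\times S$ (weak convergence is preserved under products). Now the continuous mapping theorem for weak convergence, in the form valid for bounded functions whose discontinuity set is null under the limit measure, yields $\iint h\, d(F_n\times F_n) \to \iint h\, d(F\times F)$, because $h$ is bounded and $F\times F$-almost everywhere continuous. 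Combining with the diagonal estimate finishes the proof.

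The main obstacle, and the only point requiring care, is justifying that $F_n \Rightarrow F$ \emph{almost surely} (not merely along subsequences or in probability) in a general separable metric space: this is exactly where separability is used, to guarantee a countable convergence-determining class, so that a single null set can be discarded. A secondary technical point is the continuous mapping step for the possibly-discontinuous $h$: one should check that $(F\times F)$-almost everywhere continuity of $h$ is what is needed, and that the set of discontinuities of $h$ is $(F\times F)$-null implies the Portmanteau-type conclusion $\int h\, d\mu_n \to \int h\, d\mu$ whenever $\mu_n \Rightarrow \mu$ and $h$ is bounded — this is a standard extension of the continuous mapping theorem. Neither step is deep, but both rely essentially on the hypotheses (separability of $S$, boundedness and a.e.-continuity of $h$), which is presumably why the theorem is stated with exactly these assumptions.
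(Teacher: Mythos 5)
Your proposal is correct and follows essentially the same route as the paper: empirical measures $F_n$, Birkhoff applied to a countable convergence-determining family of bounded continuous functions (using separability of $S$) to get $F_n \Rightarrow F$ almost surely, then $F_n\times F_n \Rightarrow F\times F$, the portmanteau theorem for the bounded $F\times F$-a.e.\ continuous kernel $h$, and removal of the diagonal term using boundedness of $h$. No gaps; this matches the paper's argument step for step.
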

\begin{proof}
We define the empirical distribution of the first $n$ random variables
\[
  F_n=\frac{1}{n} \sum_{i=1}^n \delta_{X_i},
\]
where $\delta_x$ denotes the Dirac delta measure in $x$. For any $L_1(F)$-function $f\colon S\to \R$, we obtain by Birkhoff's ergodic theorem
\[
  \int_S f(x) \, dF_n(x) =\frac{1}{n}\sum_{i=1}^n f(X_i) \rightarrow \int_S f(x) dF(x),
\]
almost surely. This convergence holds in particular for any bounded measurable function $f\in C_b(S)$. Since $S$ is separable, there exists a countably family of functions $f_i\in C_b(S)$, $i\geq 1$,  that is convergence determining, i.e. that convergence of the integrals $\int f_i(x) d\mu_n(x) \rightarrow \int f_i(x) d\mu(x)$, for all $i\geq 1$, implies weak convergence of the probability measures $\mu_n$ to $\mu$. Now, up to a set of measure $0$, we get
\[
  \int_S f_i(x) \, dF_n(x) =\frac{1}{n}\sum_{j=1}^n f_i\pr{X_j} \rightarrow \int_S f_i(x) dF(x),
\]
for all $i\geq 1$, and thus $F_n \Rightarrow F$ weakly. This is in fact Varadarajan's argument \cite{MR94838} for the fact that the empirical distribution of i.i.d. data $X_1,\ldots, X_n$ converges weakly almost surely to the true distribution $F$.

By Theorem 3.2 (page 21) of Billingsley \cite{MR0233396}, we obtain  convergence of the empirical product measure
\[
  F_n \times F_n \Rightarrow F \times F,
\]
  except on a set of measure $0$. Thus, for any bounded $F\times F$-a.e. continuous function $h\colon S\times S\to\R$, we obtain by the portmanteau theorem
 \[
   \frac{1}{n^2} \sum_{1\leq i, j \leq n} h(X_i,X_j) =\iint h(x,y) dF_n(x) \, dF_n(y) \rightarrow \iint h(x,y) dF(x)\, dF(y),
 \]
almost surely. Since $h$ is bounded, we obtain $\frac{1}{n^2} \sum_{i=1}^n h(X_i,X_i) \rightarrow 0$, and thus
\[
   \frac{1}{n^2} \sum_{1\leq i\neq  j \leq n} h(X_i,X_j) \rightarrow \iint h(x,y) dF(x)\, dF(y),
\]
almost surely.
\end{proof}
\section{Convergence in $L^1$ in the ergodic theorem for $U$-statistics}
In this section, we present two sufficient conditions for the convergence in $L^1$
of a $U$-statistic to $\iint h\pr{x,y}dF\pr{x}dF\pr{y}$, where $F$ denotes the distribution of $X_0$. The first sufficient condition imposes a restriction on the continuity points
of the kernel combined with a uniform integrability assumption. The second sufficient condition imposes a restriction on
the joint distribution of vectors $\pr{X_0,X_k}, k\geq 1$, but no other assumption is required for the kernel $h$.

\begin{Theorem}\label{thm:conv_thm_ergo_densite_bornee}
 Let $\pr{X_i}_{i\geq 1}$ be a stationary ergodic sequence
 taking values in $\R^d$
 and let $h\colon \R^d\times\R^d\to \R$ be a measurable function
 such that the family $\ens{h\pr{X_1,X_j},j\geq 1}$ is uniformly integrable. Let $F$ be the distribution of $X_1$.
Assume that one of the following assumptions is satisfied:
\begin{enumerate}[label=(A.\arabic*)]
\item \label{itm:h_ae_cont} the function $h$ is $F \times F$ almost everywhere
continuous and symmetric.
\item\label{itm:bounded_dens} $
\int_{\R^d}\int_{\R^d}\abs{h\pr{x,y}}dF\pr{x}dF\pr{y} $ is finite,
the random variable  $X_0$ has a bounded density
with respect to the Lebesgue measure on $\R^d$ and
 for each $k\geq 1$,  the vector $\pr{X_0,X_k}$ has a density $f_k$ with respect to the Lebesgue measure of $\R^d\times\R^d$ and $\sup_{k\geq 1}\sup_{s,t\in\R^d}f_k\pr{s,t}$ is finite.
\end{enumerate}

 Then
\begin{equation}\label{eq:conv_thm_ergo_densite_bornee}
 \lim_{n\to\infty}\E{\abs{\frac{1}{\binom{n}{2}}\sum_{1\leq i<j\leq
n}h\pr{X_i,X_j}-\int_{\R^d}\int_{\R^d}h\pr{x,y}dF\pr{x}dF\pr{y}
   }}= 0.
\end{equation}

\end{Theorem}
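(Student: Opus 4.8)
The plan is to treat the two cases separately, in each reducing the kernel to a simpler one by a truncation governed by the uniform integrability hypothesis. I would write $U_n(\phi):=\binom{n}{2}^{-1}\sum_{1\leq i<j\leq n}\phi(X_i,X_j)$ for measurable $\phi$, and for $M>0$ put $\psi_M(t):=\max(-M,\min(M,t))$ and $h_M:=\psi_M\circ h$, which is bounded by $M$ and keeps the symmetry and the $F\times F$-a.e.\ continuity of $h$ since $\psi_M$ is continuous. The workhorse estimate is that, by stationarity of the pairs $(X_i,X_j)_{i<j}$,
\begin{equation*}
\E{\abs{U_n(h)-U_n(h_M)}}\leq\frac{1}{\binom{n}{2}}\sum_{m=1}^{n-1}(n-m)\,\E{\pr{\abs{h(X_0,X_m)}-M}^{+}}\leq\sup_{m\geq1}\E{\pr{\abs{h(X_0,X_m)}-M}^{+}}=:\eps(M),
\end{equation*}
with $\eps(M)\to0$ by uniform integrability of $\ens{h(X_1,X_j),j\geq1}$; a companion argument (the same sum with $\abs{h_M}$, together with Theorem~2.1 applied to the bounded symmetric $F\times F$-a.e.\ continuous kernel $\abs{h_M}$ and monotone convergence in $M$) shows $\iint\abs{h}\,dF\,dF<\infty$ also under \ref{itm:h_ae_cont}, so the limit in \eqref{eq:conv_thm_ergo_densite_bornee} is well defined. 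Under \ref{itm:h_ae_cont} the rest is immediate: $h_M$ is bounded, symmetric and $F\times F$-a.e.\ continuous, so Theorem~2.1 gives $U_n(h_M)\to\iint h_M\,dF\,dF$ a.s.\ and hence in $L^1$; since $\abs{\iint(h-h_M)\,dF\,dF}\leq\iint(\abs{h}-M)^{+}\,dF\,dF\to0$ by dominated convergence, the displayed bound yields $\limsup_n\E{\abs{U_n(h)-\iint h\,dF\,dF}}\leq\eps(M)+\iint(\abs{h}-M)^{+}\,dF\,dF$, and letting $M\to\infty$ finishes this case.

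Case \ref{itm:bounded_dens} is the substantial one, and I would first reduce to a compactly supported \emph{continuous} kernel. The goal is: given $\eps>0$, produce $g\in C_c(\R^{d}\times\R^{d})$ with both $\iint\abs{h-g}\,dF\,dF\leq C\eps$ and $\sup_{m\geq1}\E{\abs{h-g}(X_0,X_m)}\leq C\eps$; the first quantity bounds $\abs{\iint(h-g)\,dF\,dF}$, while the second bounds $\E{\abs{U_n(h-g)}}$ uniformly in $n$ (by the computation above with $h-g$ in place of $h-h_M$). To build $g$ I would go in three steps: truncate $h$ to $h_M$ (error controlled by $\eps(M)$ in the two-point laws and by dominated convergence in $L^1(F\times F)$); restrict $h_M$ to a large cube $B_R\times B_R$ (error controlled, in the two-point laws, by $\PP(X_0\notin B_R)$ via stationarity, and in $L^1(F\times F)$ by finiteness of $F\times F$); and finally approximate the now bounded, compactly supported, hence Lebesgue-integrable kernel $h_M\ind{B_R\times B_R}$ by some $g\in C_c$ in $L^1(\mathrm{Leb})$, transferring the $L^1(\mathrm{Leb})$ error to $L^1(F\times F)$ using the boundedness of the density of $X_0$, and to every two-point law using $\sup_k\norm{f_k}_\infty<\infty$. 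This last transfer is the crux and the main obstacle, and it is exactly where both density hypotheses of \ref{itm:bounded_dens} are needed: the single kernel $g$ must approximate $h$ well simultaneously for the product measure $F\times F$ and, uniformly in $k$, for the laws of the pairs $(X_0,X_k)$.

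It then remains to prove \eqref{eq:conv_thm_ergo_densite_bornee} for $g\in C_c(\R^{d}\times\R^{d})$, equivalently (by boundedness) $U_n(g)\to\iint g\,dF\,dF$ a.s. Here Theorem~2.1 is not available because $g$ need not be symmetric; instead I would use the Stone--Weierstrass theorem on $\R^{2d}$ to write $g$ as a uniform limit of finite sums $\widetilde g=\sum_{p,q}c_{pq}\,u_p\otimes v_q$ with $u_p,v_q\in C_c(\R^{d})$ — uniform approximation passes to both $U_n$ and $\iint\cdot\,dF\,dF$ at cost $\norm{g-\widetilde g}_\infty$ — which reduces matters to a single product kernel $u\otimes v$. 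For such a kernel, writing $\overline{u}:=\int u\,dF$, $S_m:=\sum_{i=1}^{m}\pr{u(X_i)-\overline{u}}$ and $\sum_{i=1}^{j-1}u(X_i)=S_{j-1}+(j-1)\overline{u}$,
\begin{equation*}
U_n(u\otimes v)=\overline{u}\cdot\frac{1}{\binom{n}{2}}\sum_{j=2}^{n}(j-1)\,v(X_j)+\frac{1}{\binom{n}{2}}\sum_{j=2}^{n}v(X_j)\,S_{j-1};
\end{equation*}
by Birkhoff's theorem $S_m=o(m)$ a.s., so the second term is at most $\binom{n}{2}^{-1}\norm{v}_\infty\sum_{m=1}^{n-1}\abs{S_m}\to0$ a.s., and the first is a weighted ergodic average converging a.s.\ to $\overline{u}\int v\,dF=\iint u\otimes v\,dF\,dF$ (Abel summation together with Birkhoff). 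Hence $U_n(u\otimes v)$, then $U_n(\widetilde g)$, then $U_n(g)$ converge a.s.\ — and, being bounded, in $L^1$ — to the respective integrals; feeding this into the reduction above gives $\limsup_n\E{\abs{U_n(h)-\iint h\,dF\,dF}}\leq2C\eps$ for every $\eps>0$, which proves \eqref{eq:conv_thm_ergo_densite_bornee}. The two points I expect to require genuine care are the simultaneous approximation in case \ref{itm:bounded_dens} and this handling of the antisymmetric part of $g$, for which the elementary Birkhoff/Abel computation stands in for the unavailable Theorem~2.1.
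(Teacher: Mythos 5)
Your argument is correct, and for (A.2) its skeleton coincides with the paper's: an elementary Birkhoff/Abel-summation proof for product kernels, an $L^1(\lambda_{2d})$ approximation of the truncated kernel whose error is transferred simultaneously to all pair laws of $\pr{X_0,X_k}$ through $\sup_k\norm{f_k}_\infty<\infty$ and to $F\times F$ through the bounded marginal density, and finally removal of the truncation by uniform integrability. The only real deviation there is your detour through $C_c(\R^{2d})$ and Stone--Weierstrass: the paper approximates $h^{(R)}$ in $L^1(\mathrm{Leb})$ directly by finite linear combinations of products of indicators $\ind{A_\ell}\otimes\ind{B_\ell}$, for which the ergodic-theorem computation works verbatim; since that computation never uses symmetry, the continuity stage (and your concern about the asymmetry of $g$) is dispensable, though harmless. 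Where you genuinely differ is (A.1): the paper invokes Theorem~1 of Borovkova--Burton--Dehling to get convergence in probability and upgrades it to $L^1$ via uniform integrability of the normalized sums, whereas you truncate to $h_M=\psi_M\circ h$ (which indeed preserves symmetry and $F\times F$-a.e.\ continuity), apply the theorem of Section~2 to $h_M$, and control $\E{\abs{U_n(h)-U_n(h_M)}}$ uniformly in $n$ by $\sup_m\E{\pr{\abs{h(X_0,X_m)}-M}^{+}}$; this is self-contained, at the small price of having to check $\iint\abs{h}\,dF\,dF<\infty$, which your monotone-convergence remark supplies correctly. Note that your workhorse stationarity bound is exactly the observation the paper uses to pass from uniform integrability of $\ens{h\pr{X_1,X_j},j\geq 1}$ to that of the $U$-statistics, so the two treatments of (A.1) ultimately rest on the same quantitative fact; yours simply trades the external citation for the theorem already proved in Section~2.
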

\begin{proof}
Let us prove Theorem~\ref{thm:conv_thm_ergo_densite_bornee} under assumption \ref{itm:h_ae_cont}.
By Theorem~1 in \cite{MR1687339}, we know that
$\frac{1}{\binom{n}{2}}\sum_{1\leq i<j\leq
n}h\pr{X_i,X_j}\to
\int_{\R^d}\int_{\R^d}h\pr{x,y}dF\pr{x}dF\pr{y}$ in
probability. Then it suffices to notice that uniform integrability of
 $\ens{h\pr{X_1,X_j},j\geq 1}$  implies that of
$\ens{\frac{1}{\binom{n}{2}}\sum_{1\leq i<j\leq
n}h\pr{X_i,X_j},n\geq 2}$.
% Indeed, by stationarity, we can see that  uniform
% integrability of
%  $\ens{h\pr{X_1,X_j},j\geq 1}$ implies uniform integrability
%  of $\ens{h\pr{X_i,X_j},  1\leq i<j}$ and
% we conclude from the standard fact that if a collection
% $\pr{Y_{i,j},1\leq i<j}$ is uniformly integrable, so
% is $\pr{\binom n2}^{-1}\sum_{1\leq i<j\leq n}Y_{i,j}$.

We will prove Theorem~\ref{thm:conv_thm_ergo_densite_bornee} under assumption \ref{itm:bounded_dens} in three steps: first we will show that \eqref{eq:conv_thm_ergo_densite_bornee} holds when $h$ is a product of indicator functions of Borel subsets of $\R^d$. Then we will show
the result by approximating the map $\pr{x,y}\in\R^d\times\R^d\mapsto
h\pr{x,y}\ind{[-R,R]^d}\pr{x}\ind{[-R,R]^d}\pr{y}\ind{\abs{h\pr{x,y}}\leq R}$
in $L^1\pr{\mathbb P_{\pr{X_0,X_k}}}$ uniformly with respect to $k$ by a linear
combination of products of indicator functions. Then we will
conclude by uniform integrability.

First step: assume that $h\pr{x,y}=\ind{A}\pr{x}\ind{B}\pr{y}$, where $A$ and $B$ are Borel subsets of $\R^d$. Observe that
\begin{align}
\frac{1}{\binom{n}{2}}\sum_{1\leq i<j\leq
n}h\pr{X_i,X_j}&=\frac{1}{\binom{n}{2}}\sum_{1\leq i<j\leq
n} \ind{A}\pr{X_i}\ind{B}\pr{X_j}\\
&=\frac{1}{\binom{n}{2}}\sum_{j=2}^n \ind{B}\pr{X_j}
\sum_{i=1}^{j-1}\ind{A}\pr{X_i}\\
&=\frac{1}{\binom{n}{2}}\sum_{j=2}^n \pr{j-1}\ind{B}\pr{X_j}
Y_j,
\end{align}
where
\begin{equation}
Y_j=\frac 1{j-1}\sum_{i=1}^{j-1}\ind{A}\pr{X_i}.
\end{equation}
Therefore, the following decomposition takes place:
\begin{equation}\label{eq:intermediate_step_ergo_den_bornee}
\frac{1}{\binom{n}{2}}\sum_{1\leq i<j\leq
n}h\pr{X_i,X_j} 
=\frac{1}{\binom{n}{2}}\sum_{j=2}^n \pr{j-1}\ind{B}\pr{X_j}
\pr{Y_j-\PP\pr{X_0\in A}}+\PP\pr{X_0\in A}\frac{1}{\binom{n}{2}}\sum_{j=2}^n \pr{j-1}\ind{B}\pr{X_j}
\end{equation}
Observe that by the ergodic theorem and the Lebesgue dominated convergence theorem, the first term of the right hand side of \eqref{eq:intermediate_step_ergo_den_bornee} converges to $0$ in $L^1$. Moreover, by  the ergodic theorem and a summation by parts,
\begin{equation}
\E{\abs{\frac{1}{\binom{n}{2}}\sum_{j=2}^n \pr{j-1} \ind{B}\pr{X_j}-\PP\pr{X_0\in B }}}\to 0,
\end{equation}
 hence we derive that
 \begin{equation}
 \lim_{n\to\infty}\E{\abs{\frac{1}{\binom{n}{2}}\sum_{1\leq i<j\leq
n}\ind{A}\pr{X_i}\ind{B}\pr{X_j}- \PP\pr{X_0\in A}\PP\pr{X_0\in B}
   }}=0
 \end{equation}
and $\PP\pr{X_0\in A}\PP\pr{X_0\in B}=\int_{\R^d}\int_{\R^d}h\pr{x,y}dF\pr{x}dF\pr{y}$.

Second step. Let $R>0$ be fixed and define
\begin{equation}\label{eq:def_of_hR}
h^{\pr{R}}\pr{x,y}=h\pr{x,y}\ind{[-R,R]^d}\pr{x}\ind{[-R,R]^d}\pr{y}
\ind{\abs{h\pr{x,y}}\leq R},
\end{equation}
which is integrable. By a standard result in measure theory, we know that
for each positive $\eps$, there exists an integer $N$, constants $c_1,\dots,c_N$ and
sets $A_{\eps,\ell},B_{\eps,\ell},1\leq \ell\leq N$, such that
\begin{equation}
\int_{\R^d\times\R^d}\abs{h^{\pr{R}}\pr{x,y} -h_{\eps}\pr{x,y} }d\lambda_d\pr{x}
d\lambda_d\pr{y}\leq\eps,
\end{equation}
where
\begin{equation}
h_{\eps}\pr{x,y}=\sum_{\ell=1}^N c_\ell
\ind{A_{\eps,\ell}}\pr{x}\ind{B_{\eps,\ell}}\pr{y}.
\end{equation}
Therefore, using stationarity and the fact that $\pr{X_i,X_j}$ has a density $f_{j-i}$ which is bounded by a constant $M$ independent of $\pr{i,j}$,
\begin{align*}
\E{\abs{h^{\pr{R}}\pr{X_i,X_j}-h_{\eps}\pr{X_i,X_j}}}&=
 \E{\abs{h^{\pr{R}}\pr{X_0,X_{j-i}}-h_{\eps}\pr{X_0,X_{j-i}}}}\\
 &=\int_{\R^d\times\R^d}\abs{h^{\pr{R}}\pr{x,y}-h_{\eps}\pr{x,y}}
 f_{j-i}\pr{x,y}d\lambda_d\pr{x}
d\lambda_d\pr{y}\leq M\eps
\end{align*}
and
\begin{multline}
\E{\abs{\int_{\R^d}\int_{\R^d}h^{\pr{R}}\pr{x,y}dF\pr{x}dF\pr{y}-\int_{\R^d}\int_{\R^d}h_{\eps}\pr{x,y}dF\pr{x}dF\pr{y}   }}\\
\leq \int_{\R^d}\int_{\R^d}\abs{h^{\pr{R}}\pr{x,y}-h_{\eps}\pr{x,y}}
f_{X_0}\pr{x}f_{X_0}\pr{y}\leq \sup_{t\in\R^d}f_{X_0}\pr{t}\eps.
\end{multline}
Consequently,
\begin{multline}
\E{\abs{\frac{1}{\binom{n}{2}}\sum_{1\leq i<j\leq
n}h^{\pr{R}}\pr{X_i,X_j}-  \int_{\R^d}\int_{\R^d}h^{\pr{R}}\pr{x,y}dF\pr{x}dF\pr{y}    }}
\\   \leq \E{\abs{\frac{1}{\binom{n}{2}}\sum_{1\leq i<j\leq
n}h_{\eps}\pr{X_i,X_j}-  \int_{\R^d}\int_{\R^d}h_{\eps}\pr{x,y}dF\pr{x}dF\pr{y}}}+\pr{M+\sup_{t\in\R^d}f_{X_0}\pr{t}}\eps.
\end{multline}
By the first step and the triangle inequality, we deduce that
for each positive $\eps$,
\begin{multline}\label{eq:conv_avec_hR}
\limsup_{n\to\infty}\E{\abs{\frac{1}{\binom{n}{2}}\sum_{1\leq i<j\leq
n}h^{\pr{R}}\pr{X_i,X_j}-  \int_{\R^d\times\R^d}h^{\pr{R}}\pr{x,y}dF\pr{x}dF\pr{y}}} \\ \leq  \pr{M+\sup_{t\in\R^d}f_{X_0}\pr{t}}\eps.
\end{multline}
hence \eqref{eq:conv_thm_ergo_densite_bornee} holds with $h$ replaced by $h_R$.
Third step: by uniform integrability, for each positive $\varepsilon$, there
exists $\delta$ such that for each $A$ satisfying $\PP\pr{A}<\delta$,
$\sup_{1\leq i<j}\E{\abs{h\pr{X_i,X_j}}\ind{A}}<\eps$. Let $R$ be such that
$\PP\pr{ X_1 \notin [-R,R]^d}<\delta$, $\sup_{j\geq 2}\E{\abs{h\pr{X_1,X_j}}
\ind{\ens{\abs{h\pr{X_1,X_j}}>R}}}<\eps$
and $\int_{\R^d}\int_{\R^d}\abs{h\pr{x,y}-h^{\pr{R}}\pr{x,y}}dF(x)dF(y)<\eps$.
Then for
$h^{\pr{R}}$ defined as in \eqref{eq:def_of_hR},
\begin{multline}
\E{\abs{h\pr{X_i,X_j}-h^{\pr{R}}\pr{X_i,X_j}}}\\
\leq \E{\abs{h\pr{X_i,X_j}}\pr{\ind{\ens{X_i \notin [-R,R]^d}}+
\ind{\ens{X_j \notin [-R,R]^d}}+\ind{\ens{\abs{h\pr{X_1,X_j}}>R}}   } }\leq
3\eps
\end{multline}
 and it follows that
\begin{multline}
\E{\abs{\frac{1}{\binom{n}{2}}\sum_{1\leq i<j\leq
n}h \pr{X_i,X_j}-  \int_{\R^d}\int_{\R^d}h\pr{x,y}dF\pr{x}dF\pr{y}    }}
\\   \leq \E{\abs{\frac{1}{\binom{n}{2}}\sum_{1\leq i<j\leq
n}h^{\pr{R}} \pr{X_i,X_j}-
\int_{\R^d}\int_{\R^d}h^{\pr{R}}\pr{x,y}dF\pr{x}dF\pr{y}    }}+4\eps,
\end{multline}
and we conclude by the second step. This ends the proof of Theorem~\ref{thm:conv_thm_ergo_densite_bornee}.
\end{proof}

\section{Examples of failure of the convergence of $U$-statistics }
Example 4.1 given in \cite{MR1363941} shows that there exists a stationary
ergodic sequence $\pr{X_i}_{i\geq 1}$ and a bounded measurable function
for which $\pr{\binom n2^{-1}\sum_{1\leq i<j\leq n}h\pr{X_i,X_j}}_{n\geq 2}$
converges, but not to the integral of $h\pr{x,y}$ with respect to the product of the law of $X_1$.

In a similar setting, we are able to formulate two examples, the first showing that
the sequence
$\pr{\binom n2^{-1}\sum_{1\leq i<j\leq n}h\pr{X_i,X_j}}_{n\geq 2}$
may fail to converge in probability even if $\abs{h\pr{X_i,X_j}}$ is bounded
by $1$, and the second one showing that a centered $U$-statistic $\pr{\binom
n2^{-1}\sum_{1\leq i<j\leq n}\pr{h\pr{X_i,X_j}-\E{h\pr{X_i,X_j}} }}_{n\geq 2}$
may also fail to converge in probability. \\
We consider the transformation $Tx = 2x \operatorname{mod} 1$ of the unit
interval $[0, 1)$ equipped with the Borel
sigma field $\Bca$ and Lebesgue measure $\lambda$. We define $X_0(x) =x$, $X_k(x) = T^kx$ and $U\colon L^1\to L^1$ by $ UY=Y\circ T$, $Y\in L^1$.

\subsection{Example 1: non-convergence of the $U$-statistics}

\begin{Proposition}\label{prop:example1}
There exists a strictly stationary ergodic sequence $\pr{X_i}_{i\geq 1}$ and
a bounded measurable symmetric function $h\colon\R^2\to\R$
such that the sequence
$\pr{\binom n2^{-1}\sum_{1\leq i<j\leq n}h\pr{X_i,X_j}}_{n\geq 2}$ does not converge in probability.
\end{Proposition}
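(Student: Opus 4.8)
The plan is to exhibit the counterexample inside the doubling-map dynamical system already set up in the paper, choosing the random variables $X_i$ so that the kernel $h$ can "read off" a quantity that oscillates along the orbit. Concretely, I would take $X_k = T^k x$ on $([0,1),\Bca,\lambda)$ and choose $h$ to be a bounded symmetric function that depends on its two arguments only through a coarse feature — for instance the first binary digit, or membership in a carefully chosen Borel set $A$ — so that $\binom n2^{-1}\sum_{i<j}h(X_i,X_j)$ is, up to lower-order terms, a function of $\frac1n\sum_{i=1}^n \ind{A}(X_i)$ only. By the computation already carried out in the proof of Theorem~\ref{thm:conv_thm_ergo_densite_bornee} (first step), with $h(x,y)=\ind A(x)\ind A(y)$ one has
\[
\frac{1}{\binom n2}\sum_{1\leq i<j\leq n}\ind A(X_i)\ind A(X_j)
= \Big(\frac1n\sum_{i=1}^n\ind A(X_i)\Big)^2 + o(1)
\]
in an appropriate sense, so the $U$-statistic converges in probability if and only if the Birkhoff averages $S_n/n:=\frac1n\sum_{i=1}^n\ind A(T^ix)$ do. The strategy is therefore to build a set $A$ (equivalently, to recode the system so that $X_i$ records the indicator of such a set) for which $S_n/n$ fails to converge in probability, even though $T$ is ergodic.

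The key construction is a \emph{coboundary / return-time} trick: I would like a bounded measurable $g=\ind A-\lambda(A)$ whose Birkhoff sums $\sum_{i=1}^n g(T^ix)$ do not grow like $o(n)$ in probability. This is impossible for a genuine ergodic average of a fixed $L^1$ function by Birkhoff's theorem, so the actual device must be to let the kernel see \emph{two} coordinates of an enriched process: I would set $X_i=(T^ix,\xi_i)$ where $(\xi_i)$ is an auxiliary stationary ergodic sequence (or a second coordinate of a skew product over $T$) designed so that $h(X_i,X_j)$ picks out blocks of indices on which the partial sums behave differently. The cleanest realization: choose an increasing sequence of scales $n_\ell\to\infty$ and a set $A$ (depending on the extra coordinate) such that along $n=n_\ell$ the empirical frequency of $A$ among $X_1,\dots,X_n$ is close to $p_1$, while along $n=n_\ell'$ for an interlacing sequence $n_\ell'$ it is close to $p_2\neq p_1$, with positive probability bounded below — this forces $S_n/n$ to have two distinct subsequential weak limits and hence no limit in probability. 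The doubling map's exactness and the freedom to pass to a skew product or to a Rokhlin-tower construction give enough room to arrange this; alternatively one imports a known example (as in Aaronson et al.\ \cite{MR1363941}) of a stationary ergodic $\ens{0,1}$-valued sequence whose partial sums oscillate, and lets $X_i$ encode it.

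Once $A$ (or the coded process $X_i$) with oscillating frequencies is in hand, I would finish as follows. Set $h(x,y)=\ind{A}(x)\ind{A}(y)$ (symmetric, bounded by $1$); by the first-step identity above, $\binom n2^{-1}\sum_{i<j}h(X_i,X_j)=(S_n/n)^2+o(1)$ in $L^1$, hence in probability. If $\binom n2^{-1}\sum_{i<j}h(X_i,X_j)$ converged in probability to some limit $c$, then $(S_n/n)^2\to c$ in probability; since $S_n/n\in[0,1]$ a.s.\ this would force $S_n/n\to\sqrt c$ in probability, contradicting the existence of two distinct subsequential limits in probability. Therefore the $U$-statistic does not converge in probability, which is the assertion of Proposition~\ref{prop:example1}. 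The main obstacle is the middle step: producing, within an \emph{ergodic} system, a bounded kernel (equivalently a recoding of the orbit) whose associated empirical second moment genuinely oscillates — this is exactly where one must exploit the extra degree of freedom (skew product, or a tower/cutting-and-stacking construction, or an explicit borrowing from \cite{MR1363941}), since Birkhoff's theorem rules out oscillation for an honest single-variable average.
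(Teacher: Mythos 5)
There is a genuine gap, and it sits exactly where you yourself flag ``the main obstacle'': the object your reduction requires does not exist. If $\pr{X_i}_{i\geq 1}$ is \emph{any} strictly stationary ergodic sequence --- enriched or not: a skew product, a Rokhlin tower, a cutting-and-stacking construction, or a pair $X_i=\pr{T^ix,\xi_i}$ is still just a stationary ergodic sequence in a larger state space --- and $A$ is a fixed measurable set, then $\frac1n\sum_{i=1}^n\ind{A}\pr{X_i}$ converges almost surely, hence in probability, by Birkhoff's theorem applied to the shift on the sequence space. So no auxiliary coordinate can make the empirical frequency of $A$ have two distinct subsequential limits in probability, and there is no example in \cite{MR1363941} of a stationary ergodic $\ens{0,1}$-valued sequence with oscillating partial sums to import: the counterexamples there concern genuinely two-variable kernels, not one-variable averages. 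Consequently, with a product kernel $h\pr{x,y}=\ind{A}\pr{x}\ind{A}\pr{y}$ the $U$-statistic always converges --- indeed the first step of the proof of Theorem~\ref{thm:conv_thm_ergo_densite_bornee} establishes precisely this $L^1$ convergence --- so your reduction destroys the counterexample rather than producing it.

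The paper's proof avoids this trap by making the kernel read the \emph{time lag} $j-i$ rather than a one-dimensional feature of each coordinate. It takes $h=\ind{G}\pr{x,y}+\ind{G}\pr{y,x}$ with $G=\bigcup_{k\in I}\ens{\pr{x,T^kx}:x\in[0,1)}$, a countable union of graphs and hence a Lebesgue-null subset of the square; then $h\pr{X_i,X_j}=\ind{j-i\in I}$ almost surely, so the $U$-statistic equals the deterministic sequence $\binom n2^{-1}\sum_{k=1}^{n-1}\pr{n-k}\ind{k\in I}$. Choosing $I$ as a union of lacunary blocks $\pr{N'_\ell,N_{\ell+1}}\cap\N$ with $N'_\ell/N_\ell\geq\ell$ and $N_{\ell+1}/N'_\ell\to\infty$ makes this sequence tend to $1/2$ along $n=N_\ell$ and to $0$ along $n=N'_\ell$, so it fails to converge even as a sequence of numbers, while $h$ remains symmetric and bounded by $1$. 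The idea you are missing is that the kernel \emph{must} be supported on an $F\times F$-null set: any bounded $F\times F$-a.e.\ continuous kernel obeys the ergodic theorem of Section~2, so a counterexample has to exploit the joint law of $\pr{X_i,X_j}$ in a way that is invisible to the product measure, which is impossible for a kernel of the form $\ind{A}\pr{x}\ind{A}\pr{y}$.
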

\begin{proof}
Let $\pr{N_\ell}_{\ell \geq 1}$ and $\pr{N'_\ell}_{\ell \geq 0}$ be sequences of
positive  integers such that $N'_0=1$ and for $\ell\geq 1$,
$N_\ell<N'_{\ell}<N_{\ell+1}$   and
\begin{equation}\label{eq:assumption_Nl}
   N_\ell'/N_\ell \geq \ell, \quad  N_{\ell+1}/N'_\ell \to\infty.
\end{equation}
We define
\begin{equation}
  I = \bigcup_{\ell\geq 0}I_\ell,\quad I_\ell:=\ens{k\in\N : N'_\ell<k\leq N_{\ell+1}}
\end{equation}
\begin{equation}
   G = \bigcup_{k\in I}\ens{\pr{x, T^kx}: x\in [0,1) }
\end{equation}
and for $x,y \in [0,1)$,
$$
  h\pr{x,y} = \ind{G}\pr{x,y} +\ind{G}\pr{y,x} .
$$
 Since for $i<j$ and $k\geq 1$, the equality $T^{i}x =
T^{k+j}x$ can hold only for a countable set of $x$ (namely, the dyadic
rationals), we obtain for $1\leq i<j$ the identity
$h\pr{X_i,X_j}=\ind{G}\pr{X_i,X_j}$ almost surely. Moreover, by definition,
$\pr{X_i,X_j}\in G$ if and only if $T^{i+k}x = T^{j}x$ for some $k\in I$.
Almost surely, the latter identity holds if and only if $k=j-i$, and thus
$$
  h\pr{X_i,X_j} =\begin{cases} 1 &\text{if}\,\,\,  j-i \in I, \\
                              0 &\text{if}\,\,\,  j-i \in \Bbb N\setminus I.
  \end{cases}
$$
In particular, $ \abs{h\pr{X_i,X_j}}\leq 1$. By \eqref{eq:assumption_Nl} we
have
\begin{equation}\label{eq:convergence_of_some_subsequences}
  \frac1{N_\ell(N_\ell-1)} \sum_{1\leq i<j\leq N_\ell} h\pr{X_i,X_j} \to \frac12,
\quad
  \frac1{N'_\ell(N'_\ell-1)} \sum_{1\leq i<j\leq N'_\ell} h\pr{X_i,X_j} \to 0.
\end{equation}
Indeed, first observe that for each integer $n$,
\begin{align}
\sum_{1\leq i<j\leq n}h\pr{X_i,X_j}&=\sum_{j=2}^n\sum_{i=1}^{j-1}\ind{j-i\in I}\\
&=\sum_{j=2}^n\sum_{k=1}^{j-1}\ind{k\in I}\\
&=\sum_{k=1}^{n-1}\sum_{j=k+1}^n\ind{k\in I} =\sum_{k=1}^{n-1}\pr{n-k}\ind{k\in
I}\label{eq:expression_Ustats}
\end{align}
hence by definition of $I$, we get that for $\ell\geq 3$,
\begin{multline}\label{eq:step_convergence_subsequence_N_ell}
\frac 1{N_\ell\pr{N_\ell-1}}\sum_{1\leq i<j\leq N_\ell}h\pr{X_i,X_j}=
\frac 1{N_\ell\pr{N_\ell-1}}\sum_{u=1}^{\ell-2}\sum_{k\in I_u}
\pr{N_\ell-k}+\frac 1{N_\ell\pr{N_\ell-1}}\sum_{k\in I_{\ell-1}}\pr{N_\ell-k}\\
=:A_\ell+B_\ell.
\end{multline}
Note that bounding for $1\leq u\leq\ell-2$ the term $\sum_{k\in I_u}
\pr{N_\ell-k}$ by $N_\ell \operatorname{Card}\pr{I_u}$,
and $\operatorname{Card}\pr{I_u}$ by $ N_{u+1}-N_u$, we get
\begin{equation}
A_\ell\leq \frac{1}{ N_\ell-1 }\sum_{u=1}^{\ell-2}\pr{N_{u+1}-N_u}
\leq  \frac{N_{\ell-1}-N_1}{N_\ell-1}
\end{equation}
and using \eqref{eq:assumption_Nl}, we get
$A_\ell\to 0$.
 Moreover,
\begin{equation}
B_\ell=\frac 1{N_\ell\pr{N_\ell-1}}\sum_{k=N'_{\ell-1}+1}^{N_\ell}
\pr{N_\ell-k}=\frac 1{N_\ell\pr{N_\ell-1}}\sum_{j=0}^{N_\ell-N'_{\ell-1}-1}j\sim
\frac 12\frac{\pr{N_\ell-N'_{\ell-1}-1}^2}{N_\ell^2}
\end{equation}
hence $B_\ell\to 1/2$, which proves the first part of \eqref{eq:convergence_of_some_subsequences}. The second one follows from the observation that $\ens{1,\dots,N'_\ell-1}
\cap I_u$ is empty if $u\geq \ell-1$, which gives in view of \eqref{eq:expression_Ustats},
 \begin{align*}
   \frac1{N'_\ell(N'_\ell-1)} \sum_{1\leq i<j\leq N'_\ell} h\pr{X_i,X_j}&= \frac1{N'_\ell(N'_\ell-1)}\sum_{k=1}^{N'_\ell-1}
   \pr{N'_\ell-k}\ind{k\in I}\\
   &=\frac1{N'_\ell(N'_\ell-1)}\sum_{u=1}^{\ell-2}
   \sum_{k\in I_u}\pr{N'_\ell-k}\\
   &\leq\frac{1}{N'_\ell}\sum_{u=1}^{\ell-2}\pr{N_{u+1}-N'_u}\\
   &\leq\frac{1}{N'_\ell}\sum_{u=1}^{\ell-2}\pr{N_{u+1}-N_u}
   \leq \frac{N'_{\ell-1}}{N'_\ell},
 \end{align*}
 where the second inequality follows from $N_u<N'_u$,
 and $N'_{\ell-1}/N_\ell$ goes to $0$ by \eqref{eq:assumption_Nl}.
 \end{proof}
\subsection{Example 2: non-convergence of a centered  $U$-statistic}

\begin{Proposition}
There exists a strictly stationary ergodic sequence $\pr{X_i}_{i\geq 1}$ and
a  symmetric measurable function $h\colon\R^2\to\R$ such that
for each $i<j$, $\E{\abs{h\pr{X_i,X_j}}}$ is finite  but the sequence
$\pr{\binom n2^{-1}\sum_{1\leq i<j\leq n}\pr{h\pr{X_i,X_j}- \E{h\pr{X_i,X_j}}} }_{n\geq 2}$ does not converge in probability.
\end{Proposition}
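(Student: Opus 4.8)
The plan is to build on the construction of Proposition~\ref{prop:example1}, modifying the kernel so that it is no longer bounded but still integrable on each pair $\pr{X_i,X_j}$, and arranging that the centering $\E{h\pr{X_i,X_j}}$ is essentially constant over the relevant range of lags while the uncentered $U$-statistic still oscillates. Concretely, with the same doubling map $Tx=2x\bmod 1$ and the same sets $I_\ell$, I would let $h$ depend on the lag in a way that is large precisely on the graphs $\ens{\pr{x,T^kx}}$ for $k$ in the ``bad'' blocks $I$: for instance take $h\pr{x,y}=\sum_{k\in I}a_k\pr{\ind{G_k}\pr{x,y}+\ind{G_k}\pr{y,x}}$ where $G_k=\ens{\pr{x,T^kx}:x\in[0,1)}$ and the weights $a_k$ grow with $k$ (say $a_k=k$ or $a_k$ constant on each block $I_\ell$ but tending to infinity with $\ell$). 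Then, as in the previous proof, almost surely $h\pr{X_i,X_j}=a_{j-i}\ind{j-i\in I}$, so $\E{\abs{h\pr{X_i,X_j}}}=a_{j-i}\1{j-i\in I}<\infty$ for each fixed pair, which is all that is demanded.

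The key computation is then to track both $\binom n2^{-1}\sum_{i<j}h\pr{X_i,X_j}$ and the centering term $\binom n2^{-1}\sum_{i<j}\E{h\pr{X_i,X_j}}$ along the two subsequences $n=N_\ell$ and $n=N'_\ell$. Using the identity $\sum_{1\leq i<j\leq n}h\pr{X_i,X_j}=\sum_{k=1}^{n-1}\pr{n-k}a_k\ind{k\in I}$ exactly as in \eqref{eq:expression_Ustats}, and the parallel identity $\sum_{1\leq i<j\leq n}\E{h\pr{X_i,X_j}}=\sum_{k=1}^{n-1}\pr{n-k}a_k\ind{k\in I}$ — wait, these are literally equal since $h\pr{X_i,X_j}$ is a.s.\ deterministic here. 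So in fact the centering does nothing, and along $N_\ell$ the normalized sum tends to something bounded away from $0$ while along $N'_\ell$ it tends to $0$, provided the weights and the block lengths are balanced correctly. The point of introducing the $a_k$ is precisely that one may now choose them so that $h$ is genuinely unbounded (answering the ``boundedness is not enough'' part hinted in the abstract) while the deterministic-on-each-pair structure keeps $\E{\abs{h\pr{X_i,X_j}}}$ finite; the subsequential limits are then controlled by the same estimates as before, with $N_{u+1}-N_u$ replaced by $\sum_{k\in I_u}a_k$ and $N_\ell-k\leq N_\ell$ replaced by the appropriate weighted bound. One must impose on $\pr{N_\ell,N'_\ell,a_k}$ the analogues of \eqref{eq:assumption_Nl}: that $\sum_{u<\ell}\pr{\text{mass of block }u}$ is negligible compared to the mass accumulated by block $\ell-1$ inside $[1,N_\ell]$, and that the mass of the bad blocks up to $N'_\ell$ is negligible compared to $\binom{N'_\ell}{2}$.

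Here the main obstacle — and the reason a little care is needed rather than a verbatim copy — is that if one wants a truly nontrivial \emph{centered} statement (one that would fail even after subtracting the means), the kernel cannot be a.s.\ deterministic on each pair, since then centering is vacuous. So I would instead superimpose on the construction an independent randomization: take the product system $\pr{[0,1)\times\Omega',\lambda\times\mu'}$ with a second ergodic factor carrying i.i.d.-like $\pm1$ multipliers $\xi_k$, and set $h$ so that $h\pr{X_i,X_j}=a_{j-i}\xi_{?}\ind{j-i\in I}$; but making $\xi$ a genuine function of the pair $\pr{X_i,X_j}$ on a single stationary sequence, not of $i$ and $j$ separately, is delicate and is exactly the technical heart of the example. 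The natural device, mirroring Aaronson et al.\ and the structure already in play, is to enlarge the state space so that $X_k$ records both $T^kx$ and enough of a ``flag'' process to read off whether the lag is bad and with which sign, while keeping the whole sequence stationary and ergodic — one typically does this via a skew product or by coding an auxiliary stationary sequence of signs indexed by $\Z$ into the phase. Once such an $h$ is in hand, $\E{h\pr{X_i,X_j}}=0$ for the bad lags by symmetry of the signs, so the centered and uncentered statistics coincide, and the oscillation along $N_\ell$ versus $N'_\ell$ follows from a second-moment (variance) estimate: along $N'_\ell$ the sum is $L^2$-small because few lags contribute, while along $N_\ell$ one shows the sum does not concentrate at $0$, e.g.\ by exhibiting a further subsequence on which it is bounded away from $0$ in probability using the block structure and the law of large numbers for the signs within a block. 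I expect the variance bookkeeping for the $N_\ell$ direction — showing the normalized sum fails to converge to $0$ in probability, not merely that its variance is not small — to be the step requiring the most thought, and it is where the precise growth conditions \eqref{eq:assumption_Nl} (suitably strengthened) will be used.
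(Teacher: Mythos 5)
Your proposal stalls exactly at the point you yourself identify as ``the technical heart of the example.'' You correctly diagnose that the Example~1 construction cannot work here: if $h\pr{X_i,X_j}$ is a.s.\ deterministic, then $h\pr{X_i,X_j}-\E{h\pr{X_i,X_j}}=0$ a.s.\ and the centered statistic converges trivially. But your proposed repair --- enlarging the system by a skew product or an auxiliary stationary sign process $\xi_k$ so that $h\pr{X_i,X_j}=a_{j-i}\xi_{?}\ind{j-i\in I}$ --- is never carried out: you do not exhibit the stationary ergodic sequence, you do not write down a symmetric measurable $h$ on the (squared) state space realizing these values, and you do not prove the non-convergence, which you defer to unspecified ``variance bookkeeping.'' As it stands the proposal contains no construction and no proof of the key failure of convergence in probability, so it does not establish the proposition.

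For comparison, the paper's device avoids any enlargement of the system and any auxiliary signs. Keep the doubling map and set $\bar G_k=T^{-k}\pr{[1/2,1)}$, $G_k=\ens{\pr{x,T^kx}:x\in\bar G_k}$, $h\pr{x,y}=\sum_{k\geq 1}a_k\pr{\ind{G_k}\pr{x,y}+\ind{G_k}\pr{y,x}}$: restricting the graph of $T^k$ to $\bar G_k$ makes $h\pr{X_i,X_j}=a_{j-i}\,U^jf$ with $f=\ind{[1/2,1)}$, a genuine Bernoulli$\pr{1/2}$ variable depending only on $X_j$, so the centering $\E{h\pr{X_i,X_j}}=a_{j-i}/2$ is non-vacuous while each pairwise expectation is finite. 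Choosing $a_k=k^{3/2}-\pr{k-1}^{3/2}$ makes the inner sums telescope, $\sum_{1\leq i<j\leq n}h\pr{X_i,X_j}=\sum_{j=2}^n\pr{j-1}^{3/2}U^jf$, and since $\pr{U^j\pr{f-1/2}}_j$ is a martingale difference sequence for the dyadic filtration, McLeish's theorem gives that $Y_n=\binom n2^{-1}\sum_{1\leq i<j\leq n}\pr{h\pr{X_i,X_j}-\E{h\pr{X_i,X_j}}}$ converges in distribution to $N\pr{0,1/4}$ and likewise $Y_{2n}-Y_n$ has a nondegenerate Gaussian limit, which is incompatible with convergence in probability of $\pr{Y_n}$. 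Note also that in this construction $\sup_{j\geq 2}\E{\abs{h\pr{X_1,X_j}}}=\infty$, which is unavoidable in view of the paper's positive results; your sketch never confronts how the per-pair integrability is to coexist with the required oscillation, whereas the growth $a_k\sim\tfrac32 k^{1/2}$ is precisely what makes both ends meet.
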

Note that in this example, $\sup_{j\geq 2}\E{\abs{h\pr{X_1,X_j}}}$ is infinite. Moreover, the sequence
$$\pr{\binom n2^{-1}\sum_{1\leq i<j\leq n}\pr{h\pr{X_i,X_j}- \E{h\pr{X_i,X_j}}} }_{n\geq 2}$$ converges in distribution to a centered non-degenerated Gaussian random variable.
\begin{proof}
We take the same probability space and transformation as above. For
$k=1,2,\dots$ define
$$
  \bar G_k = T^{-k}\pr{[1/2, 1)}, \,\,\, G_k = \ens{\pr{x,T^kx} : x\in \bar G_k},
\,\,\,  h\pr{x,y} = \sum_{k=1}^\infty
a_k\pr{\ind{G_k}\pr{x,y}+\ind{G_k}\pr{y,x} }  ,
$$
where
\begin{equation}\label{eq:def_de_ak}
a_k=k^{3/2}-\pr{k-1}^{3/2} \mbox{ for }k\geq 2 \mbox{ and }a_1=1.
\end{equation}
 By similar arguments as in the proof of
Proposition~\ref{prop:example1}, the following equality holds
almost surely for each $1\leq i<j:$
$$
  h\pr{X_i, X_j}=  a_{j-i} U^jf, \mbox{ where } f= 1_{[1/2,
1)},
$$
hence $\E{h\pr{X_i, X_j}} = a_{j-i}/2$ and
\begin{equation}\label{eq:expression_Ustat_ex2}
 \sum_{1\leq i<j\leq n} h\pr{X_i,X_j}= \sum_{i=1}^{n-1} \sum_{j=i+1}^{n} a_{j-i}
U^jf =
  \sum_{j=2}^{n} \sum_{i=1}^{j-1} a_{j-i} U^jf = \sum_{j=2}^{n}
 \pr{j-1}^{3/2} U^jf
\end{equation}
where $f = \ind{[1/2, 1)}$. In order to have a better understanding of $U^jf$, we introduce the intervals
\begin{equation}
 I_{j,\ell}=\left[\frac{\ell-1}{2^j},\frac{\ell}{2^j}\right), j\geq 1, 1\leq \ell\leq 2^j.
\end{equation}

 \begin{Lemma}\label{lem:martingale_diff}
 The sequence $\pr{U^j\pr{f-1/2}}_{j\geq 1}$ is a   martingale difference  sequence with respect to the filtration
 $\pr{\Fca_{j}}_{j\geq 0}$, where $\Fca_j=\sigma\pr{I_{j,\ell},1\leq \ell\leq 2^j}$ and $\Fca_0=\ens{\emptyset,\Omega}$.
 \end{Lemma}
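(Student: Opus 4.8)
The plan is to read everything off the binary expansion of $x\in[0,1)$. Write $x=\sum_{i\geq1}b_i(x)2^{-i}$ for its non-terminating binary expansion, so that each $b_i\colon[0,1)\to\ens{0,1}$ is well defined outside the countable (hence $\lambda$-null) set of dyadic rationals, and on that complement the map $T$ acts as the left shift, $b_i(Tx)=b_{i+1}(x)$. Consequently $U^jf(x)=f(T^jx)=\ind{[1/2,1)}(T^jx)=b_{j+1}(x)$ for $\lambda$-almost every $x$. On the other hand, $x\in I_{j,\ell}$ precisely when $b_1(x),\dots,b_j(x)$ is the base-$2$ word of $\ell-1$; each $b_i$ with $i\leq j$ is constant on every $I_{j,\ell}$, and conversely each $I_{j,\ell}$ is an event of the form $\ens{b_1=c_1,\dots,b_j=c_j}$, so $\Fca_j=\sigma\pr{I_{j,\ell}\colon 1\leq\ell\leq2^j}=\sigma\pr{b_1,\dots,b_j}$, which is trivial for $j=0$ exactly as asserted.

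With this dictionary the lemma reduces to the classical fact that, under $\lambda$, the digits $\pr{b_i}_{i\geq1}$ are i.i.d.\ Bernoulli$(1/2)$; this is itself immediate because $\lambda\pr{b_1=c_1,\dots,b_j=c_j}=\lambda\pr{I_{j,\ell}}=2^{-j}$ for the index $\ell$ encoded by $c_1,\dots,c_j$. Indeed, $U^j(f-1/2)=b_{j+1}-1/2$ is bounded, $b_{j+1}$ is independent of $\Fca_j=\sigma\pr{b_1,\dots,b_j}$ with $\E{b_{j+1}}=1/2$, so $\E{b_{j+1}-1/2\mid\Fca_j}=0$ almost surely; and, realigning the index so that the $j$-th term is adapted, the sequence $\pr{U^{j-1}(f-1/2)}_{j\geq1}=\pr{b_j-1/2}_{j\geq1}$ is adapted to $\pr{\Fca_j}_{j\geq0}$ (since $b_j$ is $\Fca_j$-measurable) and satisfies $\E{b_j-1/2\mid\Fca_{j-1}}=0$, which is precisely the martingale difference property.

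I do not expect any genuine obstacle: the only points deserving care are the almost-sure identifications — the shift description of $T$ and the identity $U^jf=b_{j+1}$ hold only off the dyadic rationals, but that exceptional null set is harmless — together with the bookkeeping linking the exponent of $U$, the index of the digit $b_{j+1}$, and the index of the filtration $\Fca_j$. Once these are matched up, the statement is a one-line consequence of the independence of the binary digits, and the same computation simultaneously records that each $U^j(f-1/2)$ has mean $0$ and variance $1/4$, which is what will be needed afterwards for the Gaussian limit.
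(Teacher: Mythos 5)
Your proof is correct, and it reaches the same underlying fact as the paper by a genuinely different (and slicker) route. The paper proves, by induction on $j$, the explicit identity that $f\circ T^{j}$ is the indicator of a union of even-indexed dyadic intervals, and then computes the conditional expectation given $\Fca_j$ directly from that formula; your binary-digit dictionary ($T$ acting as the shift, $U^jf=b_{j+1}$ a.e., $\Fca_j=\sigma\pr{b_1,\dots,b_j}$ up to null sets) obtains the same identity with no induction and reduces the lemma to the independence of the dyadic digits under $\lambda$, which also hands you the mean $0$ and variance $1/4$ used later. Your approach buys brevity and makes the indexing transparent; the paper's buys a self-contained elementary computation that never invokes the i.i.d.\ digit fact. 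Note that your realignment of indices is not pedantry but actually fixes a slip in the paper: since $U^jf=b_{j+1}$, the variable $U^j\pr{f-1/2}$ is $\Fca_{j+1}$-measurable and \emph{not} $\Fca_j$-measurable, so the correct statement is that $\pr{U^{j-1}\pr{f-1/2}}_{j\geq 1}$ is a martingale difference sequence with respect to $\pr{\Fca_j}_{j\geq 0}$, equivalently $\E{U^j\pr{f-1/2}\mid\Fca_j}=0$ with $U^j\pr{f-1/2}$ adapted to $\Fca_{j+1}$; indeed the paper's displayed formula for $f\circ T^j$ is off by one level (its own base case gives $f\circ T=\ind{[1/4,1/2)\cup[3/4,1)}$, an $\Fca_2$-measurable function, not $\ind{[1/2,1)}$). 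This shift of the filtration is harmless for the subsequent application of McLeish's theorem, as you and the paper both implicitly use only the martingale-difference structure with respect to \emph{some} filtration. The only points to keep explicit, which you do, are that the digit identifications hold off the null set of dyadic rationals and that all the relevant statements (measurability up to modification, conditional expectations, independence) are insensitive to null sets.
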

 \begin{proof}
We show by induction on $j\geq 1$ that
\begin{equation}\label{eq:expression_f_circ_Tn}
 f\circ T^j\pr{x}=\ind{\bigcup_{\ell=1}^{2^{j-1}}I_{j,2\ell} }\pr{x}.
\end{equation}
For $j=1$, notice that if $x\in [0,1/2)$, then
$f\circ T(x)=f(2x)=\ind{[1/2,1]}(2x)=\ind{[1/4,1/2)}\pr{x}$
and if $x\in [1/2,1)$, then $f\circ
T(x)=f(2x-1)=\ind{[1/2,1]}(2x-1)=\ind{[3/2,2)}\pr{2x}=
\ind{[3/4,1)}\pr{x}$ hence for each $x\in [0,1)$,
$f\circ T(x)=\ind{[1/4,1/2)}\pr{x}+\ind{[3/4,1)}\pr{x}$.
\\
Assume now that \eqref{eq:expression_f_circ_Tn} holds true
for some $j\geq 1$ and let us show that
\begin{equation}\label{eq:expression_f_circ_Tn+1}
 f\circ T^{j+1}\pr{x}=\ind{\bigcup_{\ell=1}^{2^{j}}I_{j+1,2\ell} }\pr{x}.
\end{equation}
By \eqref{eq:expression_f_circ_Tn} with $x$ replaced by $Tx$, we derive that
\begin{equation}
 f\circ T^{j+1}\pr{x}=\ind{\bigcup_{\ell=1}^{2^{j-1}}I_{j
 ,2\ell} }\pr{Tx}.
\end{equation}
If $x\in [0,1/2)$, then
$$
\ind{\bigcup_{\ell=1}^{2^{j-1}}I_{j
 ,2\ell} }\pr{Tx}=\ind{\bigcup_{\ell=1}^{2^{j-1}}I_{j
 ,2\ell} }\pr{2x}=\ind{\bigcup_{\ell=1}^{2^{j-1}}I_{j+1
 ,2\ell} }\pr{x}
$$
and \eqref{eq:expression_f_circ_Tn+1} holds,
and if $x\in [1/2,1)$, then
$$
\ind{\bigcup_{j=1}^{2^{j-1}}I_{j
 ,2\ell} }\pr{Tx}=\ind{\bigcup_{\ell=1}^{2^{j-1}}I_{j
 ,2\ell} }\pr{2x-1}=\sum_{\ell=1}^{2^{j-1}}\ind{I_{j,2\ell}}\pr{2x-1}
 =\sum_{\ell=1}^{2^{j-1}}\ind{I_{j,2\ell+2^j}}\pr{x}\\
 =\ind{\bigcup_{\ell=2^{j-1}+1}^{2^j}I_{j+1,2\ell}}\pr{x}
 $$\\
 hence \eqref{eq:expression_f_circ_Tn+1} also holds.\\
 By \eqref{eq:expression_f_circ_Tn}, it is clear that
 $U^jf$ is $\Fca_j$-measurable. Moreover,
 \begin{equation}
 \E{U^{j+1}\pr{f-1/2}\mid \Fca_{j}}=\sum_{\ell=1}^{2^{j}}
 \E{\ind{I_{j+1,2\ell}}-1/2\mid\Fca_{j} }=0.
 \end{equation}
 \end{proof}
Notice that $\binom{n}{2}^{-1}\sum_{1\leq i<j\leq n}\pr{h\pr{X_i,X_j} -\E{h\pr{X_i,X_j}}}=\sum_{j=1}^nd_{n,j}$, where
 \begin{equation}
 d_{n,j}=\binom{n}{2}^{-1}\pr{j-1}^{3/2} \pr{U^jf-\frac12},j\geq 2, d_{n,1}=0.
 \end{equation}
 Then $\pr{d_{n,j}}_{j\geq 1}$ is a martingale difference sequence with respect to the filtration $\pr{\Fca_j}_{j\geq 0}$ given
 as in Lemma~\ref{lem:martingale_diff}.  Recall that by \cite{MR358933}, if $\pr{d_{n,j}}_{n\geq 1,1\leq j\leq n}$ is an array of martingale differences,
 such that
 \begin{equation}\label{eq:cond1_mcleish}
 \max_{1\leq j\leq n}\abs{d_{n,j}}\to 0\mbox{ in probability},
 \end{equation}
 \begin{equation}\label{eq:cond2_mcleish}
 \mbox{there exists }M>0\mbox{ such that }\sup_{n\geq 1} \max_{1\leq j\leq n}\E{d_{n,j}^2}\leq M\mbox{ and }
 \end{equation}
\begin{equation}\label{eq:cond3_mcleish}
\sum_{j=1}^{n}d_{n,j}^2\to \sigma^2\mbox{ in probability},
\end{equation}
then $\sum_{j=1}^{n}d_{n,j}$ converges in distribution to a centered normal distribution with variance $\sigma^2$.\\
Noticing that $\abs{U^jf\pr{x}-1/2}=1/2$, we can see that
\eqref{eq:cond1_mcleish} and \eqref{eq:cond2_mcleish} are satisfied as well as
\eqref{eq:cond3_mcleish} with $\sigma^2=1/4$.\\
Letting $Y_n=\binom{n}2^{-1}\sum_{1\leq i<j\leq n}\pr{h\pr{X_i,X_j}-\E{h\pr{X_i,X_j}}}$, we thus get that $Y_n\to N\pr{0,1/4}$.
Expressing $Y_{2n}-Y_n$ as a sum of a martingale difference array, the same
argument as above gives that $Y_{2n}-Y_n$ converges in distribution to a
non-degenerated normal random variable hence $\pr{Y_n}_{n\geq 1}$
cannot converge in probability.
\end{proof}

\textbf{Acknowledgements.} The authors would like to thank the editor for
a suggested improvement of the counter-examples and for having help us to
clarify where the symmetry is needed.

\providecommand{\bysame}{\leavevmode\hbox to3em{\hrulefill}\thinspace}
\providecommand{\MR}{\relax\ifhmode\unskip\space\fi MR }
% \MRhref is called by the amsart/book/proc definition of \MR.
\providecommand{\MRhref}[2]{%
  \href{http://www.ams.org/mathscinet-getitem?mr=#1}{#2}
}
\providecommand{\href}[2]{#2}

\end{document}